\newtheorem{theorem}{Theorem}[section]
\newtheorem{lemma}{Lemma}[section]
\def\XXint#1#2#3{{\setbox0=\hbox{$#1{#2#3}{\int}$ }
\vcenter{\hbox{$#2#3$ }}\kern-.6\wd0}}
\newtheorem{prop}{Proposition}[section]
\newtheorem{corollary}{Corollary}[section]
\newcommand{\ddbar}{i\partial\bar\partial}
\newcommand{\ric}{\mathrm{Ric}}
\newcommand{\innpro}[1]{\langle#1\rangle}
\newcommand{\bk}[1]{\Big(#1\Big)}
\newcommand{\xk}[1]{\big(#1\big)}
\DeclareMathOperator{\vol}{Vol}
\DeclareMathOperator{\tr}{tr}
\numberwithin{equation}{section}
\begin{document}

\address{Department of Mathematics \& Computer Science, Rutgers University, Newark, NJ 07102}

\email{bguo@rutgers.edu}

\address{Department of Mathematics, Rutgers University, Piscataway, NJ 08854}
\email{jiansong@math.rutgers.edu}

\title{ Local noncollapsing for complex Monge-Amp\`ere equations}
\author{Bin Guo \and Jian Song}\date{}

\begin{abstract} We prove a local volume noncollapsing estimate for K\"ahler metrics induced from a family of complex Monge-Amp\`ere equations, assuming a local Ricci curvature lower bound. This local volume estimate can be applied to establish various diameter  and gradient estimate.

\end{abstract}

\maketitle

\section{Introduction}

The non-local collapsing of Ricci flow on Riemannian manifolds is a fundamental theorem for the compactness of Ricci flows \cite{P}, which states that along the flow the volume of the geodesic balls is uniformly bounded below by a positive constant, if the scalar curvature along the flow is bounded. In K\"ahler geometry, it has generated great interest to study  degenerating families of K\"ahler metrics, satisfying certain complex Monge-Amp\`ere (MA) equations \cite{S}. Analogous to Ricci flow, it is tempting to study the non-local collapsing of K\"ahler metrics along a degenerating family. 

\medskip

Complex Monge-Amp\`ere equations have been an important tool in the construction of canonical K\"ahler metrics, ever since Yau's solution to the Calabi conjecture \cite{Y}. {\em A priori} estimates on the K\"ahler potentials are the keys to study the geometry of the K\"ahler metrics. Using pluripotential theory, Ko\l{}odziej \cite{K} first proved the sharp $C^0$ estimate for complex 
MA equations, assuming the right-side belongs to some Orlicz space. Ko\l{}odziej's approach was generalized to a family of complex MA equations, allowing the K\"ahler classes to degenerate in \cite{EGZ, DP, DZ, Z}. Recently, a PDE-based method \cite{GPT} was utilized  to give a new and uniform proof of $C^0$ estimates for a class of fully nonlinear partial differential equations, which include in particular the complex MA and Hessian equations. In \cite{FGS}, X. Fu and the authors first revealed the relationship between the $C^0$ estimate of the K\"ahler potentials with the global diameter bound of the K\"ahler metrics, generalizing the distance estimate in \cite{S}. The current paper aims to study how the $C^0$ estimates of K\"ahler potentials affect the geometry locally.

\medskip

Let $(X,\omega_X)$ be a compact K\"ahler manifold of complex dimension $n$. Suppose $\chi$ is a smooth closed $(1,1)$-form such that the class $[\chi]$ is {\em nef}, which means the class $[\chi]$ lies in the closure of the K\"ahler cone of $X$. For $t\in (0,1]$, we consider a family of smooth closed forms
$$\hat \omega_t = \chi + t \,\omega_X.$$ Though $\hat \omega_t$ may not be K\"ahler forms, each $[\hat \omega_t]$ is a K\"ahler class. We consider the following complex Monge-Amp\`ere equation
\begin{equation}\label{eqn:MA}
(\hat \omega_t + \ddbar \varphi_t)^n = c_t e^F \omega_X^n,\quad \sup_X \varphi_t = 0,
\end{equation}
where $F$ is a smooth function normalized such that $$\int_X e^F \omega_X^n = \int_X \omega_X^n = V, ~c_t = \frac{V_t}{V} = O(t^{n - \nu}),~V_t = \int_X \hat \omega_t^n$$ 
and $\nu\in \{0,\ldots, n\}$ is the numerical dimension of the class $[\chi]$.

We denote $\omega_t = \hat\omega_t + \ddbar\varphi_t$ to be the K\"ahler metric satisfying \eqref{eqn:MA} and write $g_t$ for the associated Riemannian metric. The following is our main theorem.
\begin{theorem}\label{thm:main}
For any $p>n$ and $R_0\in (0,1]$, if $\omega_t$ solves (\ref{eqn:MA}) for $t\in (0,1]$ and  the Ricci curvature $\ric(g_{t_0})$ satisfies  
$$\ric(g_{t_0})\ge - \frac{K^2}{R_0^{2}},  ~on~ B_{g_{t_0}}(z_0, 2 R_0) $$
 for some $K \ge 0$, $t_0\in (0, 1]$ and $z_0\in X$,  then
\begin{equation}\label{eqn:volume}
\frac{\vol_{g_{t_0}} \big( B_{g_{t_0}}(z_0, R_0) \big)}{\vol_{g_{t_0}}(X)}\ge  C R_0^{\frac{2n p}{p-n}},
\end{equation}
for some constant $C>0$ depending only on $n, p, K, \omega_X, \chi$ and $\| e^F\|_{L^1(\log L)^p}$.
\end{theorem}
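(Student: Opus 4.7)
The plan is to combine a concentrated auxiliary complex Monge-Amp\`ere equation with a local mean-value inequality on $B_{g_{t_0}}(z_0,2R_0)$. Write $B = B_{g_{t_0}}(z_0, R_0)$ and $V_{\mathrm{loc}} = \vol_{g_{t_0}}(B)$. The strategy is to produce an $\omega_X$-plurisubharmonic function whose mass is concentrated on $B$ and show that, unless $V_{\mathrm{loc}}/V_{t_0}$ is at least of the claimed order, its values would violate a Ko\l{}odziej-type $L^{\infty}$ bound.

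Concretely, for a parameter $A\in(0,V]$ to be optimized, I would solve
\begin{equation*}
(\omega_X + i\partial\bar\partial \psi)^n \;=\; A\,\frac{\mathbf{1}_B\,\omega_{t_0}^n}{V_{\mathrm{loc}}} \,+\, \frac{V-A}{V}\,\omega_X^n, \qquad \sup_X \psi = 0.
\end{equation*}
Using $\omega_{t_0}^n = c_{t_0}e^F\omega_X^n$, the right-hand side density $f_A$ satisfies $\|f_A\|_{L^1(\log L)^p} \le C\bigl(1+ Ac_{t_0}/V_{\mathrm{loc}}\bigr)\bigl(\log(2+Ac_{t_0}/V_{\mathrm{loc}})\bigr)^p \,\|e^F\|_{L^1(\log L)^p}$. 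The sharpened Ko\l{}odziej/Guo-Phong-Tong $L^{\infty}$ estimate, in the PDE form used in \cite{GPT} and adapted to degenerating families in \cite{FGS}, then yields $\|\psi\|_{L^{\infty}(X)} \le \Lambda\bigl(\|f_A\|_{L^1(\log L)^p}\bigr)$, where $\Lambda$ has an explicit polynomial dependence on its argument.

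Next I would exploit the local Ricci lower bound. Setting $u = -\psi \ge 0$, one has $\Delta_{g_{t_0}} u \le \tr_{g_{t_0}} \omega_X$, and I would control $\tr_{g_{t_0}}\omega_X$ on $B(z_0,2R_0)$ via a Chern-Lu / Schwarz-lemma argument combined with the global $C^{0}$-estimate on $\varphi_{t_0}$ from \cite{FGS} and the nefness of $[\chi]$. Under $\ric(g_{t_0})\ge -K^2/R_0^2$, Yau's mean-value inequality for (almost) subharmonic functions then gives
\begin{equation*}
u(z_0) \;\le\; C(K)\,\frac{1}{V_{\mathrm{loc}}}\int_B u\, d\vol_{g_{t_0}} + C(K,\omega_X)R_0^2.
\end{equation*}
Separately, integrating $u=-\psi$ against its defining Monge-Amp\`ere measure and against $\omega_X^n$ (an integration-by-parts / comparison argument in the spirit of \cite{GPT, FGS}) forces $\frac{1}{V_{\mathrm{loc}}}\int_B u\,d\vol_{g_{t_0}} \gtrsim A/V - C\Lambda^{\alpha}$ for some $\alpha<1$. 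Combined with the mean-value bound and with $u\le\Lambda$, this yields $\Lambda \gtrsim A$. Choosing $A$ as large as allowed by the Ko\l{}odziej bound and tracking the polynomial dependence $\Lambda \sim (A/V_{\mathrm{loc}})^{1/n}(\log(A/V_{\mathrm{loc}}))^{p/n}$ translates the inequality $\Lambda \gtrsim A$ into $V_{\mathrm{loc}}/V_{t_0} \gtrsim R_0^{2np/(p-n)}$ after rescaling by $R_0$ to account for the unit-normalized Ricci bound.

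The main obstacle will be the local control of $\tr_{g_{t_0}}\omega_X$, which is needed to treat $-\psi$ as essentially subharmonic for Yau's mean-value inequality; this is where the nefness of $[\chi]$ and the prior $C^{0}$-estimate on $\varphi_{t_0}$ from \cite{FGS} enter crucially. A second delicate point is bookkeeping the precise polynomial dependence of the Ko\l{}odziej $L^{\infty}$ estimate on the $L^1(\log L)^p$-norm of $f_A$, so that optimizing $A$ produces exactly the exponent $2np/(p-n)$ rather than a weaker power of $R_0$.
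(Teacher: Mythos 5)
Your approach shares one genuine feature with the paper (an auxiliary complex Monge-Amp\`ere equation whose right-hand side is concentrated on the ball $B$), but the way you then extract geometric information is fundamentally different and, as written, does not close. Two issues are serious.

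First, the paper solves its auxiliary equation with the same degenerating background $\hat\omega_{t_0}=\chi+t_0\omega_X$ as the original equation, precisely so that the difference $-\psi_{t,k}+\varphi_{t_0}$ is well defined as a difference of two $\hat\omega_{t_0}$-PSH potentials, and the two Monge-Amp\`ere densities can be compared pointwise. You instead take $\omega_X$ as the reference form for $\psi$. Then $\varphi_{t_0}$ and $\psi$ are potentials of different classes and there is no maximum-principle comparison available between them; your argument must then go through $\tr_{g_{t_0}}\omega_X$, and that is exactly where it breaks. In the degenerating family $\omega_{t_0}^n=c_{t_0}e^F\omega_X^n$ with $c_{t_0}=O(t_0^{n-\nu})$, so by the arithmetic-geometric inequality $\tr_{g_{t_0}}\omega_X\ge n\,(c_{t_0}e^F)^{-1/n}$, which blows up as $t_0\to 0$ whenever $\nu<n$. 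There is no hope of a $t$-uniform bound on $\tr_{g_{t_0}}\omega_X$ on $B_{g_{t_0}}(z_0,2R_0)$ from a Chern--Lu argument plus the $C^0$ estimate. Moreover, the Chern--Lu/mean-value route for $\tr_\omega\omega_X$ used later in the paper (in the gradient-estimate section) is itself an \emph{application} of Theorem 1.1: it needs the volume lower bound $\vol_{g_{t_0}}(B_{g_{t_0}}(z_0,2R_0))\gtrsim R_0^{2np/(p-n)}V_{t_0}$ as an input to the mean-value inequality. Invoking it in a proof of Theorem 1.1 is circular.

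Second, the mean-value-inequality step is not set up correctly. From $\Delta_{g_{t_0}}(-\psi)=\tr_{g_{t_0}}\omega_X-\tr_{g_{t_0}}\omega_\psi$, you record the one-sided bound $\Delta_{g_{t_0}} u\le\tr_{g_{t_0}}\omega_X$, which makes $u=-\psi$ an (approximate) supersolution, not a subsolution; Yau's local mean-value inequality in the form you quote applies to subsolutions and requires a lower bound $\Delta u\ge -\lambda$ with $\lambda$ controlled. The bound you have is in the wrong direction, and the missing bound $\tr_{g_{t_0}}\omega_\psi\le\lambda$ is not available. Even granting a mean-value inequality, your displayed inequality gives an upper bound for $u(z_0)$ in terms of $\tfrac{1}{V_{\mathrm{loc}}}\int_B u$, and you also only have $u\le\Lambda$ from the $C^0$ estimate, so the two facts point the same way and do not combine to produce the lower bound $\Lambda\gtrsim A$; you would need a \emph{lower} bound for $\tfrac{1}{V_{\mathrm{loc}}}\int_B u$ in terms of $A$, and the integration-by-parts sketch (which uses $\int(-\psi)\omega_\psi^n\ge\int(-\psi)\omega_X^n$) yields $\tfrac{1}{V_{\mathrm{loc}}}\int_B u\,d\vol_{g_{t_0}}\ge\tfrac{1}{V}\int_X u\,\omega_X^n$, which has no evident lower bound of order $A$.

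The paper's route avoids both problems: the auxiliary density is $\eta_k(R_0^2-d_{t_0}^2)e^F$ rather than an indicator, the test function $\Phi=-\varepsilon(-\psi_{t,k}+\varphi_{t_0}+C_1)^{n/(n+1)}+(R_0^2-d_{t_0}^2)$ is estimated at its maximum using only the arithmetic-geometric inequality applied to the two MA equations and the Laplacian comparison $\Delta d_{t_0}^2\le 2(2n-1)+2Kd_{t_0}/R_0$ (the sole place the Ricci bound enters), and the resulting pointwise inequality is integrated using the Tian--H\"ormander $\alpha$-invariant, a Young-type inequality against $e^F$, H\"older's inequality, and Bishop--Gromov volume comparison. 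No second-order estimate, no control on $\tr_{g_{t_0}}\omega_X$, and no prior knowledge of $\vol_{g_{t_0}}(B)$ are needed. If you want to salvage your strategy, the essential modifications would be: (i) take $\hat\omega_{t_0}$, not $\omega_X$, as the background of the auxiliary equation; (ii) replace $\mathbf 1_B$ by a function like $(R_0^2-d_{t_0}^2)_+$ whose Laplacian is controlled by Ricci comparison; and (iii) replace the mean-value/integration argument by a pointwise maximum-principle comparison of $\psi$ with $\varphi_{t_0}$.
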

We remark that if $p\to \infty$, then the power exponent of $R_0$ in \eqref{eqn:volume} tends to $2n$, which is natural since the metrics $\omega_t$ are close to Euclidean ones when $R_0$ is small. However, an example on Riemann surfaces (i.e. $n=1$) shows that this power exponent $\frac{2n p}{p-n}$ is sharp in some sense (see {\bf Example 3.1} in section \ref{section 3}). If $e^F\in L^q(X,\omega_X^n)$ for some $q>1$, the right side of \eqref{eqn:volume} can be made as $C_\epsilon R_0^{2n + \epsilon}$ for any $\epsilon>0$ with the constant $C_\epsilon>0$ depending additionally on $\epsilon.$

\smallskip

The estimate (\ref{eqn:volume}) can be viewed as an analogue of Perelman's $\kappa$-noncollapsing theorem \cite{P} as the following. Let $g(t)$ be the smooth solution of the Ricci flow on a real $n$-dimensional compact Riemannian $M$ for $t\in [0, T)$ with the initial metric $g_0$. Let  $(x_0, t_0) \in M\times [0, T)$ and $r_0\in (0, (t_0)^{1\over 2})$. If the scalar curvature satisfies
$$\textnormal{R} < r_0^{-2}, ~\textnormal{on}~ B_{g(t_0)}(x_0, r_0),$$
then
$$  \vol_{g(t_0)}\left(B(x_0, t_0, r_0)\right) \geq \kappa r_0^n,$$
for some $\kappa>0$ that only depends on $n$ and $\nu[g_0, 2T]$, Perelman's $\mu$-functional at the initial time.

We define the {\em radius of Ricci curvature lower bound} at $z$ for the metric $g_t$ by 
$$\bar r_{g_t} (z) = \sup\{r>0 ~|~ \ric(g_t)\ge -\frac{1}{r^2} \text{ in } B_{g_t}(z, r)\}$$
as an analogue of the curvature radius introduced in \cite{CT}.  An equivalent way to state Theorem \ref{thm:main} is that under the same setup
$$\frac{\vol_{g_t} \big( B_{g_t}(z, \bar r_{g_t} (z) ) \big)}{\vol_{g_t}(X)}\ge  C \bar r_{g_t} (z) ^{\frac{2n p}{p-n}}, \text{ for any }z\in X,$$
for some constant $C>0$ depending only on $n, p, \omega_X, \chi$ and $\| e^F\|_{L^1(\log L)^p}$.

\smallskip

The main idea of the proof of Theorem \ref{thm:main} is motivated by that in \cite{GPT, GPTa}, that is, we compare the K\"ahler metric $\omega_t$ to some auxiliary complex Monge-Amp\`ere equation, with the function on the right-side being truncated squared distance function of $g_t$. Additionally we also need the Riemannian geometric tools like the Laplacian comparison and volume comparison theorems.

Theorem \ref{thm:main} immediately implies the following diameter bound established in \cite{FGS},  by choosing $R_0=1$ in Theorem \ref{thm:main}.

\begin{corollary}\label{cor:main}
For any $p>n$, if $\omega_t$ solves (\ref{eqn:MA}) for $t\in (0,1]$ and the Ricci curvature $\ric(g_t)$ is bounded below by 
$$\ric(g_t)\ge - K^2  $$
 for some $K \ge 0$,  then there exists $C>0$ depending only on $n, p, K, \omega_X, \chi$ and $\| e^F\|_{L^1(\log L)^p}$ such that 
\begin{equation}\label{eqn:volume1}
\textnormal{Diam}(X,g_t) \leq C
\end{equation} 
for all $t\in (0,1]$.
\end{corollary}

We also establish a local gradient estimate as an application of Theorem \ref{thm:main}.   We consider the following complex Monge-Amp\`ere equation
\begin{equation}\label{eqn:fixed MA}(\omega_X + \ddbar \varphi)^n = e^F \omega_X^n,\quad \sup_X\varphi= 0 \end{equation}
on a compact K\"ahler manifold $(X, \omega_X)$ of complex dimension $n$ with $F\in C^\infty(X)$ satisfying the normalization condition $\int_X e^F \omega_X^n = \int_X \omega^n$.

\begin{theorem}\label{thm:main2} 
 Let $\varphi$ be the solution of equation (\ref{eqn:fixed MA}) and $g$ be the K\"ahler metric corresponding to the K\"ahler form $\omega=\omega_X+ \ddbar \varphi$.  Given $p>n$ and $R_0\in (0,1]$, if 
 $$\ric(g)\ge - \frac{K^2}{R_0^2}, ~on ~B_g(z_0, 2R_0) $$
for some $K\geq 0$ and $z_0\in X$,   then 
$$|\nabla \varphi|_g^2 \le C,\text{ on }B_g(z_0, R_0)$$
for some constant $C>0$ depending on $n, p, \omega_X, \| e^F\|_{L^1(\log L)^p}$, $K$ and $R_0$.
\end{theorem}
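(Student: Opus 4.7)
The plan is to combine the local non-collapsing provided by Theorem \ref{thm:main} with a Bochner-type Moser iteration on $u = |\nabla\varphi|^2_g$. Since equation \eqref{eqn:fixed MA} is the special case of \eqref{eqn:MA} with $\chi = \omega_X$ and $t=1$, Theorem \ref{thm:main} applies and, together with the hypothesized Ricci lower bound on $B_g(z_0, 2R_0)$, delivers via Saloff-Coste's theorem a scale-invariant Neumann--Sobolev inequality on $B_g(z_0, 3R_0/2)$, with constants depending only on $n, p, K, R_0, \omega_X$ and $\|e^F\|_{L^1(\log L)^p}$. I will also use the Ko\l{}odziej-type bound $\osc_X\varphi \le C_0 = C_0(n,p,\omega_X,\|e^F\|_{L^1(\log L)^p})$, which follows from the hypothesis $p > n$.

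The central computation is the K\"ahler Bochner identity applied to $u = |\nabla\varphi|^2_g$. Since the Monge--Amp\`ere equation forces $\Delta_g\varphi = 2n - 2\tr_\omega\omega_X$ (with $\Delta_g$ the real Laplacian), one obtains
$$\Delta_g u \;\ge\; 2|\nabla\partial\varphi|^2_g \;-\; \frac{C K^2}{R_0^2}\, u \;-\; 4\,\langle\nabla\tr_\omega\omega_X,\,\nabla\varphi\rangle_g$$
on $B_g(z_0, 2R_0)$. With a cutoff $\eta$ supported in $B_g(z_0, 2R_0)$ built from the $g$-distance to $z_0$ (whose Laplacian is controlled by the Laplacian comparison theorem under the Ricci lower bound), I would test the resulting weak inequality against $\eta^{2q} u^{q-1}$ for varying $q$ and iterate with the local Sobolev inequality from the first step. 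The starting $L^1$ bound on $u$ is produced by integration by parts:
$$\int_X |\nabla\varphi|^2_g\,\omega^n \;=\; -\int_X \varphi\,\Delta_g\varphi\,\omega^n \;=\; 2\int_X \varphi\,(\tr_\omega\omega_X - n)\,\omega^n \;\le\; C,$$
using $\|\varphi\|_\infty \le C_0$ and $\int_X \tr_\omega\omega_X\,\omega^n = n\int_X\omega_X\wedge\omega^{n-1} = n\,[\omega_X]^n$.

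The main obstacle will be the cross term $\langle\nabla\tr_\omega\omega_X,\nabla\varphi\rangle_g$, which formally involves third derivatives of $\varphi$ that are not under a priori pointwise control. I would handle it by working with the Bochner inequality in its weak (integrated) form and integrating the offending term by parts onto the test function $\eta^{2q} u^{q-1}$; the resulting terms involve $|\nabla^2\varphi|_g$, which after Cauchy--Schwarz is absorbed into the positive Hessian term $2|\nabla\partial\varphi|^2_g$, together with lower-order contributions controlled by $\|\eta\|_{C^1}$ and the background geometry of $\omega_X$. Assembling these pieces, the Moser iteration yields $\sup_{B_g(z_0,R_0)} u \le C$, which is the desired local gradient estimate.
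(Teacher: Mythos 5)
Your outline gets several things right --- invoking Theorem \ref{thm:main} for the volume lower bound, the Bochner formula, and the $L^1$ starting estimate $\int_X |\nabla\varphi|^2_\omega\,\omega^n \le \|\varphi\|_{L^\infty}\int_X\omega^n$ --- but the proposed handling of the cross term $\langle\nabla\tr_\omega\omega_X,\nabla\varphi\rangle_g$ has a genuine gap, and it is precisely the crux of the problem. After integrating this term by parts against $\eta^{2q}u^{q-1}$, the pieces one obtains are of the schematic form $\tr_\omega\omega_X\cdot\Delta\varphi\cdot\eta^{2q}u^{q-1}$ and $\tr_\omega\omega_X\cdot\langle\nabla\varphi,\nabla(\eta^{2q}u^{q-1})\rangle$. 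Using $\Delta\varphi = n - \tr_\omega\omega_X$ produces a $+(\tr_\omega\omega_X)^2$ contribution with a good sign, but also the uncontrolled term $n\int\tr_\omega\omega_X\,\eta^{2q}u^{q-1}$; and the term coming from $\nabla(u^{q-1})$ is, after Cauchy--Schwarz and Kato, bounded by $C(q-1)\int\tr_\omega\omega_X\,\eta^{2q}u^{q-1}|\nabla^2\varphi|_\omega$, which you cannot absorb into the fixed-coefficient Hessian term $2\int\eta^{2q}u^{q-1}|\nabla\partial\varphi|^2_\omega$ as $q\to\infty$, and whose Young leftover is again $(q-1)\int(\tr_\omega\omega_X)^2\eta^{2q}u^{q-1}$. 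The quantity $\tr_\omega\omega_X$ is only a priori in $L^1$ (from $\int_X\tr_\omega\omega_X\,\omega^n = n[\omega_X]\cdot[\omega]^{n-1}$), which is too weak to serve as a forcing term in Moser iteration; without a pointwise bound the iteration does not close.

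The paper resolves exactly this obstruction by splitting the argument into two separate steps. First it proves a \emph{local trace estimate}, $\sup_{B_g(z_0,3R_0/2)}\tr_\omega\omega_X \le \exp(CR_0^{-np/(p-n)})$, by applying a mean value inequality (obtained by passing to the product $B_g\times\mathbb R$ and using the standard Schoen--Yau result for subharmonic functions under a Ricci lower bound) to the function $(\log\tr_\omega\omega_X)_+ - C_0\varphi$, whose Laplacian is bounded below by the Chern--Lu inequality; the normalizing factor $\vol_g(B_{2R_0})$ in the mean value inequality is exactly where Theorem \ref{thm:main} is used. With this pointwise trace bound in hand, the gradient estimate is obtained not by iterating against powers but by applying the same mean value inequality to the modified quantity $u = |\nabla\varphi|_\omega + \varphi^2 + (\tr_\omega\omega_X)^2$: the $(\tr_\omega\omega_X)^2$ summand is crucial, because its Laplacian produces a $+|\nabla\tr_\omega\omega_X|^2_\omega$ term that absorbs (via Young) the $-|\nabla\tr_\omega\omega_X|_\omega$ term coming from Bochner plus Kato, while the remaining $-(\tr_\omega\omega_X)^3$ contribution is controlled precisely because the trace is already known to be bounded. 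If you want to keep a Moser-iteration framework, you would still need to carry out the trace estimate first as a separate step; the cross term cannot be tamed in one pass.
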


We will prove Theorem \ref{thm:main} in section \ref{section 2}. In section \ref{section 3}, we construct an example and show that the exponent in (\ref{eqn:volume}) is sharp. In section \ref{section 4}, we will prove Theorem \ref{thm:main2} and discuss some other applications of  Theorem \ref{thm:main} and Corollary \ref{cor:main} on the diameter bound of K\"ahler metrics satisfying certain complex Monge-Amp\`ere equations.

\section{Proof of Theorem \ref{thm:main}}\label{section 2}
Since $\hat \omega_t$ may not be K\"ahler, we define the ``envelope'' associated to $\hat\omega_t$:
\begin{equation}\label{eqn:envelope}\mathcal V_t = \sup\{ v | ~ v\in PSH(X,\hat \omega_t),\, v\le 0  \}.\end{equation}
It is known that for each $t\in (0,1]$, $\mathcal V_t$ is a $C^{1,1}$ function. 
%
We recall the following uniform $L^\infty$-estimates on $\varphi_t$, the solution to \eqref{eqn:MA}.
\begin{lemma}[\cite{FGS, GPTW}]\label{lemma L infinity}
There is a constant $C_0>0$ depending only on $n, p, \omega_X,\chi$ and $\|e^F\|_{L^1(\log L)^p}$ such that 
$$\sup_X |\varphi_t - \mathcal V_t| \le C_0,\quad \forall t\in (0,1].$$

\end{lemma}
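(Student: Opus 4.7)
The plan is to prove the bound on $\psi_t := \varphi_t - \mathcal V_t$ in two steps. The upper bound $\psi_t \le 0$ is immediate: since $\sup_X \varphi_t = 0$ and $\varphi_t \in \psh(X,\hat\omega_t)$, the function $\varphi_t$ is itself a competitor in the supremum defining $\mathcal V_t$, so $\varphi_t \le \mathcal V_t$ pointwise. It therefore suffices to establish a uniform lower bound $\psi_t \ge -C_0$, or equivalently $u_t := \mathcal V_t - \varphi_t \le C_0$, with $C_0$ independent of $t$.

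For this I would follow the auxiliary complex Monge-Amp\`ere strategy of \cite{GPTW}. Rewrite \eqref{eqn:MA} in the form
$$(\omega_t' + \ddbar \psi_t)^n = c_t\, e^F \omega_X^n, \qquad \omega_t' := \hat\omega_t + \ddbar \mathcal V_t \ge 0,$$
and, for a parameter $s>0$ to be chosen, introduce the comparison equation on $(X,\omega_X)$:
$$(\omega_X + \ddbar \rho_{s,t})^n = \tau_{s,t}^{-1}\, e^{-s u_t + F}\,\omega_X^n, \qquad \sup_X \rho_{s,t} = 0,$$
with $\tau_{s,t} = V^{-1}\int_X e^{-s u_t + F}\omega_X^n$. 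Apply the maximum principle on $X$ to the test function $\Phi := -\alpha\bigl(-\rho_{s,t} + \Lambda\bigr)^{n/(n+1)} + u_t$. At an interior maximum $x_0$ one has $\Delta_{\omega_t}\Phi \le 0$, and combining this with the arithmetic-geometric mean inequality
$$\tr_{\omega_t}(\omega_X + \ddbar \rho_{s,t}) \ge n \left(\frac{(\omega_X + \ddbar\rho_{s,t})^n}{\omega_t^n}\right)^{1/n}$$
together with the two Monge-Amp\`ere equations yields, after a judicious balancing of $s,\alpha,\Lambda$, a pointwise estimate of the shape
$$u_t(x_0) \le C\Bigl(1 + (-\rho_{s,t}(x_0))^{1/(n+1)}\Bigr).$$

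The principal obstacle is to bound $\rho_{s,t}$ in $L^\infty$ uniformly in $t$, since its right-hand side contains the a priori unbounded factor $\tau_{s,t}^{-1}e^{-s u_t}$. Here the hypothesis $e^F \in L^1(\log L)^p$ is crucial: via the Ko\l{}odziej-type $L^\infty$ estimate established in \cite{GPTW}, itself obtained by iterating a further auxiliary Monge-Amp\`ere construction whose right-hand side involves a truncated power of $-\rho_{s,t}$, one extracts $\|\rho_{s,t}\|_{L^\infty} \le C$ with $C$ depending only on $n, p, \omega_X, \chi$ and $\|e^F\|_{L^1(\log L)^p}$, uniformly in $t$. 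Inserting this into the pointwise inequality above closes the argument and produces the claimed constant $C_0$. A parallel subtlety, handled in \cite{FGS}, is that $\hat\omega_t$ degenerates as $t \to 0$, so one must work with the envelope $\mathcal V_t$ rather than $\sup_X \varphi_t$ as the reference potential; this is exactly why the estimate is phrased in terms of $\varphi_t - \mathcal V_t$.
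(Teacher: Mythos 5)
The paper itself does not prove Lemma~\ref{lemma L infinity}: it is explicitly labeled as recalled from \cite{FGS,GPTW}, so there is no proof in this source against which to compare line by line. That said, your sketch invokes the correct philosophy (an auxiliary Monge--Amp\`ere comparison, a concave power test function, a maximum principle, followed by Orlicz integrability and a Young-type closing step), and the observation that $\varphi_t\le\mathcal V_t$ since $\varphi_t$ is a competitor in the envelope is correct and is indeed how the upper bound is disposed of.

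There is, however, a concrete gap in the setup of your comparison. You pose the auxiliary equation on the \emph{fixed} background $\omega_X$, i.e.\ $(\omega_X+\ddbar\rho_{s,t})^n=\tau_{s,t}^{-1}e^{-su_t+F}\omega_X^n$, and then test $\Phi=-\alpha\bigl(-\rho_{s,t}+\Lambda\bigr)^{n/(n+1)}+u_t$ with $\Delta_{\omega_t}$. Computing at the maximum, the term $\Delta_{\omega_t}\rho_{s,t}=\tr_{\omega_t}(\omega_X+\ddbar\rho_{s,t})-\tr_{\omega_t}\omega_X$ produces an unabsorbed negative contribution proportional to $-\tr_{\omega_t}\omega_X$, which is not controllable: from the equation $\omega_t^n=c_te^F\omega_X^n$ one has $\tr_{\omega_t}\omega_X\ge n(c_te^F)^{-1/n}$, which blows up precisely where $F\to-\infty$, exactly the locus that the $L^1(\log L)^p$ hypothesis is meant to handle. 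The cure, which the present paper's Step~2 uses in the analogous computation for Theorem~\ref{thm:main} (and which is what \cite{GPTW} actually does), is twofold: put the auxiliary equation on the moving background $\hat\omega_t$, namely $(\hat\omega_t+\ddbar\psi_s)^n=\cdots$, and make the base of the power a difference of two $\hat\omega_t$-PSH potentials, e.g.\ $\Phi=-\alpha\bigl(-\psi_s+\varphi_t+C_1\bigr)^{n/(n+1)}+(\cdots)$, so that $\Delta_{\omega_t}(\psi_s-\varphi_t)=\tr_{\omega_t}\omega_{\psi_s}-n$ and the uncontrolled background traces cancel identically. Without this cancellation the maximum-principle step does not close. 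Once that is fixed (and $\mathcal V_t$ is used, as you note at the end, to keep the normalization meaningful as $\hat\omega_t$ degenerates), the remaining ingredients you cite --- the H\"ormander--Tian $\alpha$-invariant for $C_3\omega_X$-PSH functions, the generalized Young's inequality against $L^1(\log L)^p$, and the iteration --- are the right ones.
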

We are now ready to prove Theorem \ref{thm:main}.
\begin{proof} We break the proof into four steps.

\medskip

\noindent{\bf Step 1.} We fix a family of smooth positive functions $\eta_k: \mathbb R\to \mathbb R_+$ such that $\eta_k(x)$ converges uniformly and decreasingly to the function $x\cdot \chi_{\mathbb R_+}(x)$ as $k\to\infty$. We solve the auxiliary complex Monge-Amp\`ere equations
\begin{equation}\label{eqn:a MA}
(\hat \omega_t + \ddbar \psi_{t,k})^n = c_t \frac{ \eta_k\big( R_0^2 - d_t^2   \big)  }{A_{k, t}} e^F \omega_X^n,\quad \sup_X\psi_{t, k} = 0,
\end{equation}
where $0< A_{k,t} = \frac{c_t}{V_t} \int_X \eta_k\xk{R_0^2 - d_t^2} e^F \omega_X^n$ is the normalizing constant making the equation \eqref{eqn:a MA} solvable by Yau's theorem \cite{Y}. Here we write $d_t(x) = d_{g_t}(x, z_0)$ to be the geodesic distance of $x$ to the fixed point $z_0$ under the {\em varying} metric $g_t$. We note that although the positive function on the right-hand side of \eqref{eqn:a MA} is only Lipschitz but not necessarily smooth, the solution $\psi_{t,k}$ is still in $C^{2,\alpha}(X)$ for some $\alpha>0$, which follows from the regularity theory of complex Monge-Amp\`ere equations (see e.g. \cite{GPS, CH}). 
We observe that by dominated convergence theorem
$$A_{k,t} \to A_t: =  \frac{c_t}{V_t} \int_{ B_{g_t}(z_0, R_0)  } (R_0^2 - d_t^2) e^F \omega_X^n,\quad \text{as }k\to\infty.$$

\smallskip

\noindent{\bf Step 2.} We aim to compare $\psi_{t,k}$ with the solution $\varphi_t$. As in \cite{GPT, GPTa}, we look at the test function 
\begin{equation}\label{eqn:test}
\Phi : = -\varepsilon \xk{ -\psi_{t,k} + \varphi_t + C_1  }^{\frac{n}{n+1}} + (R_0^2 - d_t^2),
\end{equation}
where $C_1 = C_0+1$ and $C_0>0$ is the constant in Lemma \ref{lemma L infinity}, and $\varepsilon>0$ is chosen as \begin{equation}\label{eqn:eps}
\varepsilon = \bk{ \frac {n^3 + (n+1) (4n+2K)   }{n^2}    }^{\frac n{n+1}} A_{k,t}^{\frac{1}{1+n}} = : C_2 A_{k, t}^{\frac{1}{1+n}} ,
\end{equation}
where we fix the constant $C_2>0$ which depends on $n$ and $K$.  
As an initial observation we note that on $X$
$$ -\psi_{t,k} + \varphi_t + C_1 = - (\psi_{t, k} - \mathcal V_{t}) + (\varphi_t - \mathcal V_t) + C_0+1\ge  1,$$
by Lemma \ref{lemma L infinity} and the fact $\psi_{t,k}\le \mathcal V_t$ for each $k$. Therefore the function $\Phi<0$ on $X\backslash B_{g_t}(z_0, R_0)$.

\smallskip

We claim that $\Phi\le 0$ on $X$.  Let $x_{\max}\in X$ be a maximum point of $\Phi$. If $x_{\max}\not \in B_{g_t}(z_0, R_0)$, we are done. So we may assume $x_{\max}\in B_{g_t}(z_0, R_0)$. Applying Calabi's trick if necessary \cite{SY}, we may assume $d_t^2$ is smooth at $x_{\max}$. Then at $x_{\max}$ we have (write $\Delta = \Delta_{\omega_t}$)
\begin{align*}
0&\ge \Delta_{\omega_t} \Phi(x_{\max}) = \frac{n \varepsilon}{n+1} \xk{-\psi_{t,k} 
+ \varphi_t + C_1 }^{-\frac{1}{1+n}} (\Delta \psi_{t,k} - \Delta \varphi_t) \\
&\quad  + \frac{n\varepsilon}{(n+1)^2} \xk{-\psi_{t,k} + \varphi_t + C_1   }^{-\frac{n+2}{n+1}} |\nabla (\psi_{t,k} - \varphi_t)|^2_{\omega_t} - \Delta d_t^2\\
&\ge \frac{n \varepsilon}{n+1} \xk{-\psi_{t,k} 
+ \varphi_t + C_1 }^{-\frac{1}{1+n}} (\tr_{\omega_t} \omega_{\psi_{t,k}} - \tr_{\omega_t} \omega_t) -  2d_t \Delta d_t - 2\\
& \ge  \frac{n^2 \varepsilon}{n+1} \xk{-\psi_{t,k} 
+ \varphi_t + C_1 }^{-\frac{1}{1+n}} \bk{\frac{\omega^n_{\psi_{t,k}}}{\omega_t^n}}^{1/n} -  \frac{n^2 \varepsilon}{n+1} \xk{-\psi_{t,k} 
+ \varphi_t + C_1 }^{-\frac{1}{1+n}} \\
&\quad   -       2d_t \big ( \frac{2n-1}{d_t} + \frac{K}{R_0} \big) - 2\\
&\ge  \frac{n^2 \varepsilon}{n+1} \xk{-\psi_{t,k} 
+ \varphi_t + C_1 }^{-\frac{1}{1+n}} \bk{\frac{ \eta_k\big( R_0^2 - d_t^2   \big)  }{A_{k, t}} }^{1/n} -  \frac{n^2 \varepsilon}{n+1}   -   4n - 2K \\
&\ge  \frac{n^2 \varepsilon}{n+1} \xk{-\psi_{t,k} 
+ \varphi_t + C_1 }^{-\frac{1}{1+n}} \bk{\frac{ R_0^2 - d_t^2   }{A_{k, t}} }^{1/n} -  \frac{n^2 \varepsilon}{n+1}   -   4n - 2K 
\end{align*}
where we write $\omega_\phi = \hat \omega_t + \ddbar \phi$ for a function $\phi\in PSH(X,\hat \omega_t)$,   in the fourth line we apply the arithmetic-geometric inequality and the fact $\tr_{\omega_t} \omega_t = n$, in the fifth line we apply the Laplace comparison theorem \cite{SY} of the distance function which holds under our assumption that $\ric(\omega_t)\ge - K^2 R_0^{-2}$ on the geodesic ball $B_{g_t}(z_0, 2R_0)$, in the sixth line we use the equations satisfied by $\psi_{t,k}$ and $\varphi_t$ and in the last line we apply the choice of $\eta_k$ which satisfies $\eta_k(s)\ge s$ for $s\ge 0$. Hence at $x_{\max}$ we have
\begin{equation*}
R_0^2 - d_t^2 \le A_{k,t} \bk{n+ \frac{(n+1) (4n+2K)}{ n^2 \varepsilon }}^n (-\psi_{t,k} + \varphi_t + C_1)^{\frac n{n+1}} < \varepsilon  (-\psi_{t,k} + \varphi_t + C_1)^{\frac n{n+1}},
\end{equation*}
by the choice the $\varepsilon$ in \eqref{eqn:eps}. This finishes the proof of the claim that $\Phi\le 0$ on $X$. 

\smallskip

\noindent{\bf Step 3.} From $\Phi\le 0$ we infer that on $B_{g_t}(z_0, R_0)$
$$\frac{  (R_0^2 - d_t^2 )^{\frac{n+1}{n}}}{A_{k,t}^{1/n}} \le C_2 ^{\frac{n+1}{n}} ( -\psi_{t,k} + \varphi_t + C_1   ).   $$ 
We can view $\psi_{t,k}$ as a $C_3\omega_X$-PSH function for some $C_3>0$ depending only on $\chi$ and $\omega_X$. So the H\"ormander-Tian estimate \cite{Ti, H} holds for each $\psi_{t,k}$. We thus have a small constant $\alpha = \alpha(\omega_X, \chi, K)>0$ such that the following Trudinger-type inequality holds
\begin{equation}\label{eqn:1}
\int_{B_{g_t}(z_0, R_0)} e^{ \alpha \frac{  (R_0^2 - d_t^2 )^{\frac{n+1}{n}}}{A_{k,t}^{1/n}}   }\omega_X^n \le \int_X e^{ \alpha C_2^{\frac{n+1}{n}} ( -\psi_{t,k} + \varphi_t +C_1  ) } \omega_X^n\le C,
\end{equation}
for some uniform constant $C>0$, where we have chosen $\alpha>0$ small so that $\alpha C_2^{\frac{n+1}{n}}< \text{the }\alpha$-invariant of $(X, C_3\omega_X)$.

\smallskip

\noindent{\bf Step 4.} It then follows from a generalized Young's inequality that
$$v^p e^F \le e^F (1+ |F|^p) + C_p e^{2v}$$
which applied to $v= \alpha \frac{  (R_0^2 - d_t^2 )^{\frac{n+1}{n}}}{2 A_{k,t}^{1/n}}$ yields by \eqref{eqn:1} that
\begin{equation}\label{eqn:2}
\int_{B_{g_t}(z_0, R_0)} (R_0^2  - d_t^2) ^{\frac{n+1}{n} p} e^F \omega_X^n \le C A_{k,t}^{p/n},
\end{equation}
for some $C>0$ depending on $n, p, \omega_X, \chi, K$ and $\| e^F\|_{L^1(\log L)^p}$. Letting $k\to\infty$, \eqref{eqn:2} implies that
\begin{equation}\label{eqn:3}
\int_{B_{g_t}(z_0, R_0)} (R_0^2  - d_t^2) ^{\frac{n+1}{n} p} e^F \omega_X^n \le C A_{t}^{p/n}.
\end{equation}
On the other hand, by Holder inequality we have
\begin{align*}
A_t = \frac{c_t}{V_t}\int_{B_{g_t}(z_0, R_0)} (R_0^2  - d_t^2) e^F& \le \bk{  \int_{B_{g_t}(z_0, R_0)} (R_0^2  - d_t^2) ^{\frac{n+1}{n} p} e^F\omega_X^n }^{\frac{n}{p(n+1)}} \cdot \bk{ \int_{B_{g_t}(z_0, R_0)} e^F \omega_X^n  }^{1/q}\\
&\le C A_t^{1/(n+1)} \bk{ \int_{B_{g_t}(z_0, R_0)} e^F \omega_X^n  }^{1/q}
\end{align*}
where $q= \frac{p(n+1)}{p(n+1) - n}$ is the conjugate of $p$. We thus conclude that 
\begin{equation}\label{eqn:final}
A_t\le C \bk{ \int_{B_{g_t}(z_0, R_0)} e^F \omega_X^n  }^{\frac{n+1}{n q}}=C \bk{ \int_{B_{g_t}(z_0, R_0)} e^F \omega_X^n  }^{1 + \frac{p-n}{np}}.
\end{equation}
Note that $c_t/V_t$ is uniformly bounded, so \eqref{eqn:final} shows that there exists a uniform constant $C>0$ such that
\begin{equation*}
 \int_{B_{g_t}(z_0, R_0)} (R_0^2 - d_t^2)e^F \omega_X^n \le C \bk{ \int_{B_{g_t}(z_0, R_0)} e^F \omega_X^n  }^{1 + \frac{p-n}{np}}.
\end{equation*}
In particular, we obtain
\begin{equation}\label{eqn:final 1}
R_0^2 \int_{B_{g_t}(z_0, R_0/2)}e^F \omega_X^n \le C \bk{ \int_{B_{g_t}(z_0, R_0)} e^F \omega_X^n  }^{1 + \frac{p-n}{np}}.
\end{equation}
Multiplying $c_t$ on both sides of \eqref{eqn:final 1} we get
\begin{equation}\label{eqn:final 2}
R_0^2 \vol_{g_t}( B_{g_t}(z_0, R_0/2)  ) \le \frac{C}{c_t^{\frac{p-n}{np}}} \xk{\vol_{g_t} ( B_{g_t}(z_0, R_0)  )}^{1+\frac{p-n}{np}}.
\end{equation}
By volume comparison \cite{SY}, the function $(0, 2R_0) \ni r \mapsto r^{-2n} e^{- \frac{K}{R_0} r  } \vol_{g_t} (B_{g_t}(z_0, r)) $ is non-increasing, which implies
$$\vol_{g_t} ( B_{g_t}(z_0, R_0)  )\le C(n, K) \vol_{g_t} ( B_{g_t}(z_0, R_0/2)  ).$$ Combined with \eqref{eqn:final 2}, this implies that
\begin{equation}\label{eqn:final 3}
R_0^{\frac{2np}{p-n}} V_t\le C \vol_{g_t} \xk{B_{g_t}(z_0, R_0)  },
\end{equation}
where as usual $V_t = \int_X \omega_t^n$ and $C>0$ is a uniform constant. This finishes the proof of Theorem \ref{thm:main}.

\end{proof}

\section{An example}\label{section 3} 

In this section, we will construct an example and demonstrate that the exponent $\frac{2np}{p-n}$ is sharp in the estimate of Theorem \ref{thm:main}. In other words, the estimate in Theorem \ref{thm:main} may fail with the exponent replaced by $\frac{2np}{p-n} - \epsilon$ for any $\epsilon>0$.

\medskip

\noindent{\bf Example 3.1.}\,  Let $\mathbf D\subset \mathbb C\subset \mathbb{CP}^1$ be the disk with radius $1/2$. Consider the function $\varphi(z) = (-\log |z|^2)^{-a}$ for some $a>0$. We calculate the ``metric'' defined by $\ddbar \varphi$:
$$\omega = \ddbar \varphi = a(a+1) \frac{idz\wedge d\bar z}{|z|^2 (-\log |z|^2)^{a+2}} = e^F idz\wedge d\bar z.$$ 
Straightforward calculations show that $\| e^F\|_{L^1(\log L)^p(\mathbf D)}$ is bounded for any $1< p < a+1$ and is unbounded when $p\ge a+1$. Fix a point $z_0\in \mathbf D\backslash \{0\}$ close to $0$, and $w\in \mathbf D$ with $\arg z_0 = \arg w$. By the rotational symmetry of $\omega$, we see that
\begin{align*}d_\omega(z_0, w) =&  \sqrt{a(a+1)} \int_{\min\{|z_0|, |w|\}}^{\max\{|z_0|, |w|\}} \frac{dr }{ r (-\log r^2)^{1 + \frac a 2}  }\\
=& \sqrt{ \frac{a+1}{2^a a}  }\bk{ \frac{1}{(- \log \max \{|z_0|, |w|\})^{a/2} }  -  \frac{1}{(- \log \min \{|z_0|, |w|\})^{a/2} }    }.
\end{align*}
In particular, letting $w\to 0$ we see that 
$$d_\omega(z_0, 0) = \sqrt{ \frac{a+1}{2^a a}  }  \frac{1}{(- \log |z_0|)^{a/2} } =: 6 R_0>0. $$
Take $z_0^{\pm} \in \mathbf D$ with the same arguments as $z_0$ and $|z_0^{+}| = |z_0|^{2^ {2/a}} < |z_0|$ and $|z_0^{-}| = |z_0|^{(2/3)^{a/2}} > |z_0|$ so that $(-\log |z_0^+|)^{a/2} = 2 (- \log |z_0|)^{a/2}$ and $(-\log |z_0^-|)^{a/2} = \frac{2}{3} (- \log |z_0|)^{a/2}$. Then it follows that
$$d_\omega(z_0, z_0^\pm) = \frac 1 2 \sqrt{ \frac{a+1}{2^a a}  }  \frac{1}{(- \log |z_0|)^{a/2} }  = 3 R_0.$$
On the other hand, the length of the circles around $0$ in the annulus $\{|z_0^+| \le |z| \le |z_0^-|\}$ is given by (again by the rotational symmetry of $\omega$ these circles are $\omega$-geodesics)
$$L_{\omega}(\text{circle}) = \sqrt{a(a+1)}\int_0^{2\pi} \frac{d\theta}{(-\log |z|^2)^{1+ \frac a 2}}= \frac{2\pi \sqrt{a(a+1)} }{(-\log |z|^2)^{1+ \frac a 2}}< \frac{R_0}{10}$$
if $|z_0|>0$ small enough. This implies that
$$B_\omega(z_0, 2 R_0) \subset \{|z_0^+| \le |z| \le |z_0^-|\} \subset B_\omega(z_0, 4 R_0) \subset \mathbf D\backslash \{0\}.$$
By straightforward calculations, we have
\begin{align*}\ric(\omega) = & \ddbar \log \xk{ |z|^2 (-\log |z|^2)^{a+2}  } = - \frac{a+2}{|z|^2 (-\log |z|^2)^2} idz\wedge d\bar z\\
= & -\frac{a+2}{a(a+1)} (-\log |z|^2)^a \cdot \omega\\
\ge & - \frac{K^2}{R_0^2} \omega, \quad \text{in the annulus }  \{|z_0^+| \le |z| \le |z_0^-|\} = : \mathcal A,
\end{align*}
where $K>0$ is a constant depending only on $a>0$. Finally we calculate the $\omega$-volume of the above annulus as
$$\vol_\omega(\mathcal A) = \int_{|z_0^+| } ^{|z_0^-|} \frac{2\pi r dr }{r^2 (-\log r^2 )^{a+2}}= \frac{C_a}{(-\log |z_0|)^{a+1}} = C_a' R_0^{\frac{2(a+1)}{a}} . $$
Note that $e^F \in L^1(\log L)^p$ for {\bf any} $1<p< a+1$ and $\vol_\omega(\mathcal A)\ge \vol_\omega(B_\omega(z_0, 2 R_0))$. We see that the exponent $\frac{2np}{p-n}$ of $R_0$ in Theorem \ref{thm:main} is sharp. Moreover, the Ricci curvature $\ric(\omega)$ is not bounded below on the whole $\mathbf D$ since it decays to $-\infty$ near $0$. 

\medskip

Though $\omega$ in this example is a `singular' K\"ahler metric on a local domain, we can regularize it near $0$ to make it a genuine K\"ahler metric, and glue it to $\mathbb {CP}^1$ to get an example on compact K\"ahler manifolds (cf. e.g. \cite{Sze}). One can naturally generalize the above example to higher dimensions and indeed the exponent $\frac{2np }{p - n}$ is sharp.

\section{Applications of Theorem \ref{thm:main}}\label{section 4}
We discuss some geometric applications of the noncollapsing result in Theorem \ref{thm:main}. We will show a local gradient estimate of the K\"ahler potential, if the Ricci curvature is bounded below locally. Under certain assumption on the Ricci curvature lower bound, we will prove the
 diameter bound  and a  local noncollapsing result for K\"ahler metrics along the normalized K\"ahler-Ricci flow on minimal K\"ahler manifolds.

\subsection{MA equations with a fixed background K\"ahler metric} As the first application of Theorem \ref{thm:main}, we prove Theorem \ref{thm:main2} as generalization and a new proof of the global gradient estimate  in \cite{FGS}. 

Let $(X,\omega_X)$ be a given compact K\"ahler manifold. We consider the following complex Monge-Amp\`ere equation
\begin{equation}\label{eqn:fixed MA}(\omega_X + \ddbar \varphi)^n = e^F \omega_X^n,\quad \sup_X\varphi= 0.\end{equation}
Under the assumption of locally Ricci curvature lower bound, we prove the following local gradient estimate on $\varphi$. We write $\omega = \omega_X + \ddbar \varphi$, which satisfies \eqref{eqn:fixed MA} and denote $g$ the associated Riemannian metric of $\omega$. We recall the statement of Theorem \ref{thm:main2} below.

\begin{theorem}\label{prop 3.1}
Given $p>n$ and $R_0\in (0,1]$, suppose $\ric(g)\ge - K^2 / R_0^2$ on the geodesic ball $B_g(z_0, 2R_0)$, then 
$$|\nabla \varphi|_g^2 \le C,\text{ on }B_g(z_0, R_0)$$
for some constant $C>0$ depending on $n, p, \omega_X, \| e^F\|_{L^1(\log L)^p}$, $K$ and $R_0$.
\end{theorem}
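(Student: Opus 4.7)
The plan is to adapt the auxiliary complex Monge--Amp\`ere technique developed in the proof of Theorem \ref{thm:main} to produce a pointwise upper bound on $|\nabla\varphi|_g^2$, rather than a lower bound on volumes. The Bochner formula for $|\nabla\varphi|_g^2$ and the local Ricci lower bound will enter in place of the Laplacian comparison for $d_g^2$, and Theorem \ref{thm:main} itself will be invoked in the final absorption step to control the normalizing constant of the auxiliary equation.

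Concretely, I would solve (after the same smooth approximation $\eta_k$ as in Step 1 of the proof of Theorem \ref{thm:main}) an auxiliary MA equation of the form
$$
(\omega_X + \ddbar \psi_k)^n = A_k^{-1}\,\eta_k\xk{R_0^2 - d_g^2}\,\xk{1+|\nabla\varphi|_g^2}^{\beta n}\,e^F\,\omega_X^n,\quad \sup_X\psi_k=0,
$$
for a fixed exponent $\beta>0$ to be chosen, with normalizing constant $A_k$ (finite by Lemma \ref{lemma L infinity} and a standard integration-by-parts $L^2$ bound on $\nabla\varphi$). Then apply the maximum principle to the test function
$$
\Phi \;=\; -\varepsilon\xk{-\psi_k + \varphi + C_1}^{\frac{n}{n+1}} + (R_0^2 - d_g^2)\,\log\xk{1+|\nabla\varphi|_g^2},
$$
parallel to \eqref{eqn:test}. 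At an interior maximum point, which must lie in $B_g(z_0, R_0)$ since $\Phi\le 0$ outside, compute $\Delta_\omega\Phi\le 0$ using (i) the two MA equations processed via AM--GM exactly as in Step 2 of the proof of Theorem \ref{thm:main}; (ii) the Laplacian comparison for $d_g^2$ under the assumed local Ricci lower bound; and (iii) the Bochner formula for $\Delta_\omega|\nabla\varphi|_g^2$, using the identity $\Delta_\omega\varphi = n - \tr_\omega \omega_X$. A judicious choice of $\beta$ and $\varepsilon$ should force a pointwise inequality of the shape $(R_0^2 - d_g^2)\log(1+|\nabla\varphi|_g^2)\le C\,(-\psi_k+\varphi+C_1)^{n/(n+1)}$ on $X$.

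The H\"ormander--Tian $\alpha$-invariant estimate applied to $\psi_k$ (as in Step 3 and \eqref{eqn:1}), combined with the uniform $L^\infty$ bound on $\varphi$ from Lemma \ref{lemma L infinity}, then converts this comparison into exponential integrability of $\log(1+|\nabla\varphi|_g^2)$ on $B_g(z_0, R_0/2)$, hence an $L^q$ bound on $|\nabla\varphi|_g^2$ for every $q<\infty$. Theorem \ref{thm:main} is used in a crucial way at this point: the non-collapsing lower bound on $\vol_{g}(B_g(z_0, R_0))$, together with the local volume comparison afforded by the Ricci lower bound, controls the normalizing constant $A_k$ in terms of $\int_{B_g(z_0, R_0)}(1+|\nabla\varphi|^2)^{\beta n}e^F\omega_X^n$, which can be absorbed once $\beta$ is chosen slightly smaller than the exponential threshold furnished by the $\alpha$-invariant estimate. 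Passing $k\to\infty$ and performing one final absorption step then yields the claimed pointwise bound on $B_g(z_0, R_0)$.

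The main obstacle I foresee is controlling the higher-order cross term $\langle \partial\Delta_\omega\varphi, \partial\varphi\rangle = -\langle \partial(\tr_\omega\omega_X), \partial\varphi\rangle$ appearing in the Bochner formula, since we have no a priori pointwise bound on $\tr_\omega\omega_X$. The remedy is the standard K\"ahler identity $|\partial(\tr_\omega\omega_X)|_\omega^2 \le C\,\tr_\omega\omega_X \cdot |\nabla^2\varphi|_\omega^2$, which allows this cross term to be absorbed by the positive $|\nabla^2\varphi|^2$ term also furnished by Bochner. Balancing the exponent $\beta$ in the auxiliary equation, the coefficient $\varepsilon$ in $\Phi$, and the logarithmic weight on $|\nabla\varphi|_g^2$ so that every unwanted term is absorbed and the resulting bound is uniform in $k$ is the technical heart of the argument.
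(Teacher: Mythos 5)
Your proposal takes a genuinely different route from the paper, and it has a gap that I do not think you can close as stated.

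The paper does not use the auxiliary Monge--Amp\`ere/maximum-principle machinery of Theorem \ref{thm:main} for the gradient bound at all. Instead it proves a local De Giorgi--Nash--Moser type mean value inequality (Lemma \ref{lemma 3.1}, proved by passing to the product $B_g(p,2R_0)\times\mathbb R$ and applying the classical mean value theorem to $e^{As/R_0}(u+1)$), and then proceeds in two steps. First, the Schwarz lemma inequality
$$\Delta_\omega \log\tr_\omega\omega_X \ge -K^2/R_0^2 - C_0\tr_\omega\omega_X$$
shows that $u=(\log\tr_\omega\omega_X)_+ - C_0\varphi$ is almost subharmonic; the mean value inequality together with the noncollapsing lower bound on $\vol_g(B_{2R_0})$ from Theorem \ref{thm:main}, the calculus inequality $[(\log x)_+]^2\le x$, and the cohomological bound on $\int_X\tr_\omega\omega_X\,\omega^n$ give a pointwise bound on $\tr_\omega\omega_X$ on $B_{3R_0/2}$. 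Second, with $\tr_\omega\omega_X$ already bounded pointwise, the Bochner formula gives an \emph{almost subharmonicity} inequality for $u=|\nabla\varphi|_\omega + \varphi^2 + (\tr_\omega\omega_X)^2$ (the quantity $(\tr_\omega\omega_X)^2$ is added precisely so that $\Delta_\omega[(\tr_\omega\omega_X)^2]\ge 4|\nabla\tr_\omega\omega_X|_\omega^2 - \dots$ absorbs the cross term $-|\nabla\tr_\omega\omega_X|$ from Bochner, using the just-obtained pointwise bound on $\tr_\omega\omega_X$), and a second application of the mean value inequality together with the integration-by-parts identity
$$\int_X|\nabla\varphi|_\omega^2\,\omega^n = \int_X(-\varphi)(\omega-\omega_X)\wedge\omega^{n-1}\le\int_X(-\varphi)\,\omega^n$$
gives the pointwise gradient bound. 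Theorem \ref{thm:main} enters only to convert the local volume in the denominator of the mean value inequality into a global volume.

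The main gap in your proposal is the final step, ``performing one final absorption step then yields the claimed pointwise bound.'' The H\"ormander--Tian estimate applied to $\psi_k$ gives exponential integrability of the quantity $(R_0^2-d_g^2)\log(1+|\nabla\varphi|_g^2)$, hence at best $L^q$ bounds on $1+|\nabla\varphi|^2$ for each finite $q$ over $B_g(z_0,R_0/2)$. An $L^q$ bound for all $q<\infty$ with constants growing in $q$ does not imply an $L^\infty$ bound, and no iteration or Moser-type upgrade is supplied; this is exactly the role that the mean value inequality plays in the paper's proof, and your scheme has no substitute for it. Two secondary issues: (i) when you compute $\Delta_\omega\Phi$ at the interior maximum, the product term $(R_0^2-d_g^2)\log(1+|\nabla\varphi|^2)$ produces an unsigned cross term $2\,\mathrm{Re}\langle\nabla(R_0^2-d_g^2),\nabla\log(1+|\nabla\varphi|^2)\rangle_\omega$ which your sketch does not address; (ii) the ``standard K\"ahler identity'' $|\partial(\tr_\omega\omega_X)|_\omega^2\le C\,\tr_\omega\omega_X\cdot|\nabla^2\varphi|_\omega^2$ you invoke for absorbing the Bochner cross term needs justification and a pointwise bound on $\tr_\omega\omega_X$, which your scheme produces only after (not before) the gradient argument --- the paper avoids this circularity by bounding $\tr_\omega\omega_X$ first. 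I would recommend following the paper's two-step mean value inequality argument, which sidesteps all of these difficulties.
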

We first recall the $L^\infty$ estimate on $\varphi$ in  \cite{K, GPT}
\begin{equation}\label{eqn:L}
\| \varphi\|_{L^\infty} \le \overline{C}(n, p, \omega_X, \| e^F\|_{L^1(\log L)^p} ).
\end{equation}
To prove Theorem \ref{prop 3.1}, we need the following mean value inequality \cite{SY}. 
\begin{lemma}\label{lemma 3.1}
Let $(M,g)$ be a Riemannian manifold such that $B_g(p, 2R_0)$ is relatively compact in $M$ with $R_0\in (0,1]$. Suppose $u\ge 0$ is a nonnegative function on $B_g(p, 2 R_0)$ satisfying $\Delta_g u\ge - A^2/R_0^2$ and $\ric(g)\ge - K^2/R_0^2$ on $B_g(p, 2R_0)$, then the following mean value inequality holds
$$\sup_{B_g(p, \tau R_0)} u^2 \le \frac{C}{\vol_g(B_g(p, 2R_0))} \int_{B_g(p, 2 R_0)} (u^2+1) dV_g,$$ for some $C>0$ depending on $\tau\in [1,4/3]$, $K$ and $A$.
\end{lemma}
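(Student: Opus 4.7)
The plan is to prove this by a standard Moser iteration, after first recasting the differential inequality $\Delta_g u \ge -A^2/R_0^2$ into a more convenient linear-subsolution form. The mean value inequality for subsolutions on Riemannian manifolds with a Ricci lower bound is classical (see \cite{SY}); I sketch the strategy to keep the exposition self-contained.

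First I would normalize. Set $\bar u = u + 1$, so that $\bar u \ge 1>0$ on $B_g(p, 2R_0)$. Since $\bar u \ge 1$, the hypothesis $\Delta_g u \ge -A^2/R_0^2$ upgrades to
$$\Delta_g \bar u \ge -\frac{A^2}{R_0^2} \ge -\frac{A^2}{R_0^2} \,\bar u,$$
so $\bar u$ is a positive subsolution of the linear elliptic operator $\Delta_g + A^2/R_0^2$ on the ball. It now suffices to prove the mean value bound with $u$ replaced by $\bar u$, since $u^2 \le \bar u^2$ and $\bar u^2 \le 2(u^2+1)$.

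Next I would extract the two Riemannian-geometric ingredients implied by $\ric(g)\ge -K^2/R_0^2$ on $B_g(p, 2R_0)$: (i) Bishop--Gromov volume comparison, giving volume doubling on all concentric subballs with a constant depending on $n,K$; and (ii) Buser/Saloff-Coste local Neumann--Poincaré inequality, which together with doubling yields a scale-invariant Sobolev inequality of the form
$$\bk{\frac{1}{\vol_g(B_g(p,r))}\int_{B_g(p,r)}|\phi|^{2\kappa}\,dV_g}^{1/\kappa} \le \frac{C_S\, r^2}{\vol_g(B_g(p,r))}\int_{B_g(p,r)} \xk{|\nabla\phi|_g^2 + r^{-2}\phi^2}\, dV_g,$$
valid for all $\phi\in C^\infty_c(B_g(p,r))$ and $r\le 2R_0$, with $\kappa=\kappa(n)>1$ and $C_S = C_S(n,K)$.

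With these in hand, Moser iteration proceeds in the usual way. Fix a decreasing sequence $r_k\in[\tau R_0, 2R_0]$ with $r_k-r_{k+1}\sim 2^{-k}R_0$, and cutoffs $\eta_k\in C^\infty_c(B_g(p,r_k))$ with $\eta_k=1$ on $B_g(p,r_{k+1})$ and $|\nabla \eta_k|\le C2^k/R_0$. Test the inequality $\Delta_g \bar u + \frac{A^2}{R_0^2}\bar u\ge 0$ against $\bar u^{q-1}\eta_k^2$ for $q\ge 2$, integrate by parts, absorb gradient terms, and apply the Sobolev inequality to get the reverse Hölder estimate
$$\|\bar u\|_{L^{q\kappa}(B_g(p,r_{k+1}))} \le \bk{C(n,K,A)\,2^{2k}}^{1/q}\,\|\bar u\|_{L^q(B_g(p,r_k))}.$$
Iterating from $q_0=2$ along $q_k=2\kappa^k$, with the geometric series $\sum \kappa^{-k}$ summable, yields
$$\sup_{B_g(p,\tau R_0)}\bar u \le \frac{C}{\vol_g(B_g(p,2R_0))^{1/2}}\bk{\int_{B_g(p,2R_0)}\bar u^2\, dV_g}^{1/2},$$
which squared gives the claimed inequality.

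The main technical obstacle is the scale-invariant bookkeeping: one must verify that the constants emerging from the Sobolev/Poincaré inequalities, from the $R_0^{-2}\bar u$ zeroth order term, and from the cutoff gradients $|\nabla\eta_k|\sim 2^k/R_0$ all combine so that the final constant depends only on $\tau,K,A$ (and $n$) and not on $R_0$. This is automatic provided one consistently uses the Saloff-Coste formulation of the Sobolev inequality above, whose built-in $r^{-2}\phi^2$ term exactly matches the zeroth-order contribution from the $A^2/R_0^2$ factor.
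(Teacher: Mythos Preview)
Your argument is correct. After the common first step of replacing $u$ by $v=u+1$ so that $\Delta_g v \ge -(A^2/R_0^2)v$, however, the paper takes a different and shorter route: it passes to the product $B_g(p,2R_0)\times\mathbb{R}$ with metric $\hat g = g + ds^2$ and defines $\hat v = e^{As/R_0}v$, which satisfies $\Delta_{\hat g}\hat v\ge 0$ outright, with $\ric(\hat g)\ge -K^2/R_0^2$; then the \emph{pure} subharmonic mean value inequality (Theorem~6.2, Ch.~2 of \cite{SY}) applies directly, and integrating out the $s$-variable gives the result. This product-manifold trick absorbs the zeroth-order term $A^2/R_0^2$ into an exponential weight, so one never has to track it through the iteration; your approach instead carries the term along and relies on the scale-invariant Saloff--Coste Sobolev inequality to keep the constants independent of $R_0$. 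Both are standard; the paper's version is a quicker reduction to a black box, while yours is more self-contained but requires more bookkeeping.
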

\begin{proof}
We consider $v = u+1\ge 1$, and $v$ satisfies $\Delta_g v \ge - (A^2 /R_0^2)v$. We look at the product manifold $B_g(p, 2R_0)\times \mathbb R$ with the metric $\hat g = g + ds ^2$, where $s\in \mathbb R$ is the natural coordinate. Define $\hat v = e^{A s/R_0} v$ to be function on $B_g(p, 2R_0)\times \mathbb R$, and it satisfies $\Delta_{\hat g} \hat v \ge 0$. Clearly $\ric(\hat g) \ge - K^2/R_0^2$ and we can then apply the standard mean value inequality (cf. Theorem 6.2, Ch. 2, \cite{SY}) to conclude that
\begin{align*}\sup_{B_g(p, \tau R_0)\times (-\tau R_0, \tau R_0)} \hat v^2  \le & \frac{C}{R_0 \vol_g(B_g(p, 2 R_0))} \int_{-2R_0}^{2R_0} \int_{B_g(p, 2R_0)} \hat v^2 dV_g ds\\
\le & \frac{C}{ \vol_g(B_g(p, 2 R_0))} \int_{B_g(p, 2R_0)} v^2 dV_g ds \end{align*}
from which the lemma follows.

\end{proof}
\begin{lemma}\label{lemma 4.2}
Under the same assumptions as in Theorem \ref{prop 3.1}, we have
$$\tr_\omega \omega_X \le \exp\big( { C R_0^{- np/(p-n)}   }\big),\text{ on }B_g(z_0, 3R_0/2),$$ for some $C>0$ depending  on $n, p, \omega_X, \| e^F\|_{L^1(\log L)^p}$ and $K\ge 0$.
\end{lemma}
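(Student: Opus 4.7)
The plan is to combine the classical Chern--Lu / Aubin--Yau Laplacian inequality with the mean value inequality of Lemma \ref{lemma 3.1} and the volume noncollapsing of Theorem \ref{thm:main}. First, I would apply the Chern--Lu identity to $\log \tr_\omega \omega_X$. Since $\omega_X$ is fixed on a compact $X$ and hence has bounded bisectional curvature, and since $\ric(\omega)\ge -K^2/R_0^2$ on $B_g(z_0,2R_0)$, one obtains
\[
\Delta_\omega \log \tr_\omega \omega_X \ge -B\,\tr_\omega \omega_X - \frac{C}{R_0^2}
\]
on $B_g(z_0,2R_0)$, with $B=B(\omega_X)$ and $C=C(n,K)$. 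Setting $u=\log\tr_\omega\omega_X - (B+1)\varphi$ and using the MA equation in the form $\Delta_\omega\varphi = n - \tr_\omega\omega_X$ turns this into
\[
\Delta_\omega u \ge \tr_\omega\omega_X - \frac{C'}{R_0^2}.
\]
By the $L^\infty$ estimate \eqref{eqn:L} for $\varphi$, the function $v:=e^u$ is comparable to $\tr_\omega\omega_X$ up to a multiplicative constant depending only on $\overline C$ and $B$.

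Second, chain-differentiating yields
\[
\Delta_\omega v \;\ge\; v\,\Delta_\omega u \;\ge\; c_1 v^2 - \frac{C_1}{R_0^2}v,
\]
and because the right-hand side, viewed as a quadratic in $v\ge 0$, is bounded below by $-C_2/R_0^4$, we get the pointwise bound $\Delta_\omega v \ge -C_2/R_0^4$ on $B_g(z_0,2R_0)$. After rescaling the radii so that the outer ball of the lemma lies inside $B_g(z_0,2R_0)$ while its inner ball contains $B_g(z_0,3R_0/2)$, Lemma \ref{lemma 3.1} (applied with $A = O(1/R_0)$) gives
\[
\sup_{B_g(z_0,3R_0/2)} v^2 \;\le\; \frac{C(n,K,R_0)}{\vol_g(B_g(z_0,2R_0))}\int_{B_g(z_0,2R_0)} (v^2+1)\,dV_g.
\]

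Third, the cohomological identity $\int_X \tr_\omega\omega_X\,\omega^n = n[\omega_X]^n$ provides the uniform $L^1$ bound $\int_X v\,dV_g \le C_3$. The crude estimate $\int v^2 \le (\sup v)\int v \le C_3\sup v$ converts the mean value inequality into a quadratic inequality for $M:=\sup_{B_g(z_0,3R_0/2)}v$, solving of which gives $M\le C(n,K,R_0)/\vol_g(B_g(z_0,2R_0))$. Theorem \ref{thm:main} then supplies $\vol_g(B_g(z_0,2R_0))\ge CR_0^{2np/(p-n)}\vol_g(X)$, and in combination with $v \asymp \tr_\omega\omega_X$ this yields a polynomial-in-$R_0^{-1}$ bound on $\tr_\omega\omega_X$, in particular the stated exponential bound.

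The main obstacle is in Step 3, namely showing that the $C(n,K,R_0)$ produced by Lemma \ref{lemma 3.1} does not grow faster than the volume bound allows. Tracking constants through the product-manifold proof of Lemma \ref{lemma 3.1}, with $A=O(1/R_0)$, gives at worst an $e^{O(1/R_0)}$ factor, which is dominated by $\exp(CR_0^{-np/(p-n)})$ since $np/(p-n)>1$. A secondary technical point is that the Chern--Lu inequality is only valid on $B_g(z_0,2R_0)$, so all manipulations must be localised there; the only global input is the cohomological $L^1$ bound, which holds unconditionally.
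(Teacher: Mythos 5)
Your proposal starts from the same ingredients as the paper (Chern--Lu inequality, the auxiliary function involving $\varphi$, the mean value inequality of Lemma~4.2, and the noncollapsing of Theorem~1.1), but it diverges at a crucial step, and that step has a genuine gap. In Step~3 you write the crude estimate $\int v^2\le(\sup v)\int v\le C_3\sup v$ and claim this gives a quadratic inequality in $M:=\sup_{B_{3R_0/2}}v$. But the integrals in the mean value inequality run over the \emph{larger} ball $B_{2R_0}$, so the supremum produced by the crude estimate is $\sup_{B_{2R_0}}v$, not $M$. The resulting inequality
$$M^2\le \frac{C(n,K,R_0)}{\vol_g(B_{2R_0})}\Bigl(C_3\sup_{B_{2R_0}}v+\vol_g(B_{2R_0})\Bigr)$$
has different suprema on the two sides and does not close. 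One could try to repair this by an iteration over a nested family of balls $3R_0/2<\rho<2R_0$ (a De Giorgi/Moser-type argument), absorbing $\sup_{B_{\rho'}}v$ via Young's inequality and tracking the blow-up of the constant as the annulus shrinks, but you have not carried this out and it is not automatic, especially with your constant already carrying an $e^{O(1/R_0)}$ factor from the choice $A=O(1/R_0)$.

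The paper sidesteps this entirely by \emph{not} exponentiating. It keeps the auxiliary function at the level of the logarithm, $u=(\log\tr_\omega\omega_X)_+-C_0\varphi\ge0$, for which $\Delta_\omega u\ge -A^2/R_0^2$ with $A^2=K^2+C_0nR_0^2$ uniformly bounded (the $-C_0\tr_\omega\omega_X$ term from Chern--Lu is cancelled by $-C_0\Delta_\omega\varphi$, so you also avoid the $A=O(1/R_0)$ degradation). Applying Lemma~4.2 to this $u$ gives $\sup u^2\lesssim\frac{1}{\vol_g(B_{2R_0})}\int(u^2+1)$, and now the key observation is the elementary calculus inequality $[(\log x)_+]^2\le x$ for $x>0$: it converts the right-hand integral $\int u^2$ into (up to bounded terms) $\int_X\tr_\omega\omega_X\,\omega^n=n\int_X\omega_X^n$, which is a fixed cohomological quantity. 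The estimate is therefore \emph{linear}, closes in one pass, and, after dividing by the noncollapsed volume from Theorem~1.1 and taking a square root and an exponential, yields exactly $\tr_\omega\omega_X\le\exp\bigl(CR_0^{-np/(p-n)}\bigr)$. I would suggest reworking your argument to stay with the sub-logarithm $u$ and use this pointwise inequality rather than passing to $v=e^u$.
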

\begin{proof}
It follows from the Schwarz-lemma type inequality that on $B_{g}(z_0, 2R_0)$
\begin{equation}\label{eqn:use}
\Delta_\omega \tr_\omega \omega_X \ge -\frac{K^2}{R_0^2} \tr_\omega \omega_X - C_0(\tr_\omega \omega_X)^2 +\frac{ |\nabla \tr_\omega \omega_X|^2_\omega}{ \tr_\omega \omega_X},
\end{equation}
where $C_0>0$ is an upper bound of the bisectional curvature of $\omega_X$ on $B_g(z_0, 2R_0)$. Straightforward calculations show that
$$\Delta_\omega \log \tr_\omega \omega_X \ge -\frac{K^2}{R_0^2} - C_0 \tr_\omega \omega_X.$$ Hence the function $u: = (\log \tr_\omega \omega_X)_+ -  C_0 \varphi $ satisfies
$$\Delta u \ge - \frac{K^2}{R_0^2} - C_0n = -\frac{A^2}{R_0^2}, \quad \text{with } A^2 = K^2 + C_0 n R_0^2. $$
We apply Lemma \ref{lemma 3.1} to conclude that (we denote $B_R = B_g(z_0, R)$ for simplicity)
\begin{align*}
\sup_{B_{3R_0/2}} [(\log \tr_\omega \omega_X)_+]^2 \le & 2 \sup_{B_{3R_0/2}} u^2 + 2 (C_0 \overline C)^2\\
\le & \frac{C}{\vol_g(B_{2R_0})} \int_{B_{2R_0}} [(\log \tr_\omega \omega_X)_+]^2 dV_g + C\\
\le & \frac{C}{R_0^{2np/(p-m)}} \int_X \tr_\omega \omega_X \; \omega^n + C \le \frac{C}{R_0^{2np/(p-n)}} ,
\end{align*}
where in the third line we use the calculus inequality $[(\log x)_+]^2\le x$ for any $x>0$ and Theorem \ref{thm:main}. Hence we have
$$\sup_{B_{3R_0/2}} \tr_\omega \omega_X \le \exp\big( { C R_0^{-np/(p-n)}   }\big).$$
\end{proof}
\begin{proof}[Proof of Theorem \ref{prop 3.1}]
By Bochner formula, we have on $B_g(z_0, 2 R_0)$
\begin{align*}\Delta _\omega |\nabla \varphi|^2_\omega = & |\nabla \nabla \varphi|_\omega^2 + |\nabla \bar \nabla \varphi |_\omega^2 + \ric_g(\nabla \varphi, \bar \nabla \varphi) - 2 Re\innpro{ \nabla\varphi, \bar \nabla \tr_\omega \omega_X   }_\omega\\
\ge &  |\nabla \nabla \varphi|_\omega^2 + |\nabla \bar \nabla \varphi |_\omega^2  - K^2 R_0^{-2 } |\nabla \varphi|^2 _\omega - 2 |\nabla \varphi| |\nabla \tr_\omega \omega_X|_\omega,
\end{align*}
which by the Kato's inequality\footnote{The Kato's inequality states that $2|\nabla |\nabla \varphi||_\omega^2 \le |\nabla\nabla \varphi|_\omega^2 + |\nabla \bar \nabla \varphi|_\omega^2$, which can be verified by the Cauchy-Schwarz inequality. It suffices to verify this inequality when $|\nabla \varphi| >0$. In normal coordinates of $\omega$, we calculate 
\begin{align*}
4|\nabla \varphi|^2 |\nabla |\nabla \varphi||^2 & = |\nabla |\nabla \varphi|^2|^2 \\
& = (\varphi_i \varphi_{\bar i})_{\bar j} (\varphi_k \varphi_{\bar k})_j\\
& = \varphi_{i\bar j} \varphi_{\bar i}\varphi_{kj}\varphi_{\bar k} + \varphi_{i\bar j} \varphi_{\bar i} \varphi_k \varphi_{\bar k j } + \varphi_{i}\varphi_{\bar i \bar j} \varphi_{kj} \varphi_{\bar k} + \varphi_{i}\varphi_{\bar i \bar j} \varphi_k \varphi_{\bar k j}   \\
&\le 2 |\nabla \varphi|^2 |\nabla \nabla \varphi| |\nabla \bar \nabla\varphi | + |\nabla \varphi|^2 |\nabla \bar \nabla \varphi|^2 + |\nabla \varphi|^2 |\nabla \nabla \varphi|^2
\\
& = |\nabla \varphi|^2 (|\nabla \nabla \varphi| + |\nabla \bar \nabla \varphi|)^2\le 2|\nabla \varphi|^2 (|\nabla \nabla \varphi|^2 + |\nabla \bar \nabla \varphi|^2).
\end{align*} Cancelling $2|\nabla \varphi|^2$ on both sides gives the desired Kato's inequality. } implies that
$$\Delta_\omega |\nabla \varphi|_\omega \ge - K^2 R_0^{-2} |\nabla \varphi|_\omega - |\nabla \tr_\omega \omega_X|_\omega.$$
Define $u = |\nabla \varphi|_\omega + \varphi^2 + (\tr_\omega \omega_X)^2$. We calculate using \eqref{eqn:use} 
\begin{align*}
\Delta_\omega u\ge &  - K^2 R_0^{-2} |\nabla \varphi|_\omega - |\nabla \tr_\omega \omega_X| + 2 |\nabla \varphi|_\omega^2 + 2n \varphi - 2\varphi \tr_\omega \omega_X \\
& + 2 |\nabla \tr_\omega \omega_X|^2_\omega  - 2 \frac{K^2}{R_0^2} (\tr_\omega \omega_X)^2 - 2 C_0(\tr_\omega \omega_X)^3 \\
\ge & - C(R_0),\quad \text{in } B_g(z_0, 3R_0/2),
\end{align*}
where in the last inequality the Schwarz inequality, \eqref{eqn:L} and Lemma \ref{lemma 4.2}. It then follows from Lemma \ref{lemma 3.1} that
$$\sup_{B_{R_0}} u^2 \le \frac{C(R_0)}{ \vol_g(B_g(z_0, \frac{3R_0}{2})) } \int_{B_{3R_0/2}} (1+u^2) \omega^n\le \frac{C(R_0)}{R_0^{2np/(p-n)} } \int_X |\nabla \varphi|_\omega^2 \omega^n + C(R_0).$$
The proof is complete by observing that 
$$\int_X |\nabla \varphi|_\omega^2 \omega^n = \int_X  (-\varphi) (\omega - \omega_X)\wedge \omega^{n-1}\le \int_X(-\varphi)\omega^n$$
and the latter integral is bounded due to \eqref{eqn:L}.
\end{proof}

We remark that if the Ricci curvature $\ric(\omega)$ is bounded below on $X$, then Proposition \ref{prop 3.1} implies a global gradient estimate of $\varphi$, which was proved in \cite{FGS} using maximum principle. Note that in \cite{FGS}, the assumption $\ddbar F \le A \omega_X$ was made to ensure the Ricci curvature being bounded below. With this extra assumption, the bound on $\| e^F\|_{L^1(\log L)^p}$ with $p>n$ together with Corollary \ref{cor:main} imply the diameter bound of $(X,\omega)$, where $\omega = \omega_\varphi$ satisfies \eqref{eqn:fixed MA}.

\smallskip

Finally we mention that when the function $e^F$ on the right-side of \eqref{eqn:fixed MA} belongs to $L^q$ for some $q>1$, the diameter bound of $\omega_\varphi$ has been proved in \cite{Li}, using the H\"older continuity of the solution $\varphi$ established in \cite{K1, DDK} (see \cite{GPTW1}). When $\| e^F\|_{L^1(\log L)^p}\le A$ for some $A>0$ and $p>3n$, the diameter bound of $\omega_\varphi$ has been obtained in \cite{GPTW1} after they establish the modulus of continuity of the K\"ahler potentials. 


\subsection{Normalized K\"ahler-Ricci flow} As in the previous subsection, we assume $X$ is a minimal K\"ahler manifold (i.e. $K_X$ is {\em nef}, and we do not assume $K_X$ is semi-ample) with $\omega_X$ a given K\"ahler metric. Let $\Omega$ be a smooth volume form such that $\chi = \ddbar \log \Omega\in |K_X|$ is a representative of the canonical class of $X$. Multiplying a constant to $\omega_X$ if necessary we may assume that $\chi \le \omega_X$. We consider the following normalized K\"ahler-Ricci flow
\begin{equation}\label{eqn:KRF}
\frac{\partial \omega_t}{\partial t} = -\ric(\omega_t) - \omega_t,\quad \omega_t|_{t=0} = \omega_X.
\end{equation}
It is well-known \cite{TZ} that \eqref{eqn:KRF} exists for all $t\in [0,\infty)$ and \eqref{eqn:KRF} is equivalent to the following parabolic complex Monge-Amp\`ere equation
\begin{equation}\label{eqn:pMA}
\frac{\partial \varphi_t}{\partial t} = \log \frac{ (\chi + e^{-t} (\omega_X - \chi) + \ddbar \varphi_t)^n   }{e^{- (n-\nu)t}\Omega} - \varphi_t, \quad \varphi_t|_{t=0} = 0,
\end{equation}
with $\omega_t = \chi + e^{-t} (\omega_X - \chi) + \ddbar \varphi_t >0$.

\begin{lemma}\label{lemma 3.3}
There is a uniform constant $C>0$ such that  
$$\sup_X\dot \varphi_t =\sup_X \frac{\partial \varphi_t}{\partial t} \le C \quad \text{and }\quad \sup_X \varphi_t\le C.$$
\end{lemma}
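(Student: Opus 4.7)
The plan is to prove the two bounds by rather different arguments. For $\sup_X \dot\varphi_t$, I would apply the parabolic maximum principle to the time derivative of the Monge-Amp\`ere equation. Setting $\theta_t := \chi + e^{-t}(\omega_X - \chi)$ (so that $\partial_t \theta_t = -e^{-t}(\omega_X - \chi)$) and differentiating (\ref{eqn:pMA}) in $t$, a short calculation gives
\[
(\partial_t - \Delta_{\omega_t})\dot\varphi_t = -e^{-t}\tr_{\omega_t}(\omega_X - \chi) + (n - \nu) - \dot\varphi_t.
\]
Since the hypothesis $\chi \le \omega_X$ makes the first term on the right nonpositive, the parabolic maximum principle yields $\sup_X \dot\varphi_t \le \max(n - \nu,\, \sup_X \dot\varphi_0)$, and the initial data $\dot\varphi_0 = \log(\omega_X^n/\Omega)$ is smooth and bounded.

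The estimate on $\sup_X \varphi_t$ is more subtle, since integrating $\dot\varphi_t \le C$ in $t$ would only give a bound linear in $t$. My plan is instead to combine an integrated form of the equation with Gr\"onwall. Rewriting (\ref{eqn:pMA}) as $\omega_t^n = e^{\dot\varphi_t + \varphi_t - (n - \nu)t}\,\Omega$ and integrating over $X$ gives
\[
\int_X e^{\dot\varphi_t + \varphi_t}\,\Omega = e^{(n - \nu)t}\,V_t.
\]
Expanding $V_t = [(1 - e^{-t})\chi + e^{-t}\omega_X]^n$ via the binomial theorem and using that $[\chi]^k \cdot [\omega_X]^{n - k} = 0$ for $k > \nu$ by the definition of numerical dimension, I would deduce that $e^{(n - \nu)t}\,V_t$ is uniformly bounded in $t \ge 0$. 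Jensen's inequality then produces $\int_X (\dot\varphi_t + \varphi_t)\,\Omega \le C$; writing $I(t) := \int_X \varphi_t\,\Omega$, this reads $\dot I + I \le C$, and Gr\"onwall together with $I(0) = 0$ yields a uniform bound $I(t) \le C$.

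To convert this into a supremum bound, observe that $\theta_t \le C\omega_X$ uniformly in $t$ (since $\chi$ is smooth), so the potentials $\varphi_t \in \psh(X, \theta_t)$ are uniformly $C\omega_X$-plurisubharmonic. The standard $L^1$-compactness of sup-normalized quasi-PSH families then yields a universal constant $C'$ with $\sup_X \varphi_t \le C' + \frac{1}{\int_X \Omega}\int_X \varphi_t\,\Omega \le C''$. The hardest part will be this second bound: the essential ingredient is the cancellation $e^{(n - \nu)t}\,V_t = O(1)$ forced by the numerical dimension of $[\chi]$, which is precisely what closes the Gr\"onwall loop; without this exponential decay of volumes, only a linear-in-$t$ estimate would be available.
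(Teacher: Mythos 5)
Your proposal is correct and takes essentially the same route as the paper: parabolic maximum principle applied to the differentiated flow equation to bound $\sup_X\dot\varphi_t$, then an integrated Jensen--Gr\"onwall argument exploiting the volume decay $e^{(n-\nu)t}V_t=O(1)$ (equivalently the paper's bound $\log(V_t/V)\le C-(n-\nu)t$), finished off by the sub-mean-value inequality for uniformly quasi-plurisubharmonic potentials. You even repair a small slip in the paper, which writes $\dot\varphi_t|_{t=0}=0$ and integrates from zero initial data; in fact $\dot\varphi_0=\log(\omega_X^n/\Omega)$ is merely bounded, which, as you correctly observe, is all the Gr\"onwall step requires.
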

\begin{proof}
Taking $\frac{\partial }{\partial t}$ on both sides of \eqref{eqn:pMA}, we get
$$\frac{\partial \dot \varphi_t}{\partial t} = \Delta_{\omega_t} \dot \varphi_t - e^{-t} \tr_{\omega_t} (\omega_X - \chi) - \dot\varphi_t + (n-\nu),\quad \dot\varphi_t|_{t=0} = 0,$$ so by maximum principle and $\omega_X - \chi\ge 0$ it follows that for $\varphi_{t,\max} = \max_X \dot\varphi_t$
$$\frac{d}{dt} \dot\varphi_{t,\max}\le - \dot\varphi_{t,\max} + (n-\nu)$$
multiplying both sides by $e^t$ and integrating over $t$ it yields that $\dot\varphi_{t,\max}\le (n-\nu) (1-e^{-t})\le n-\nu$, which gives the upper bound of $\dot\varphi_t$. To see the upper bound of $\varphi_t$, we calculate (denote $V =\int_X \Omega$)
\begin{align*}
\frac{d}{dt}\frac{1}{V}\int_{X} \varphi_t\Omega & =\frac 1 V \int_X \log \frac{ \omega_t^n   }{e^{- (n-\nu)t}\Omega} \Omega - \frac 1 V \int_X \varphi_t\Omega\\
& = (n-\nu) t + \frac 1 V \int_X \log \frac{ \omega_t^n   }{\Omega} \Omega - \frac 1 V \int_X \varphi_t\Omega\\
&\le (n-\nu) t + \log \bk{\frac 1 V \int_X \omega_t^n} - \frac 1 V \int_X \varphi_t \Omega\\
&\le (n-\nu) t + \log \bk{\frac C V e^{- (n-\nu) t}} - \frac 1 V \int_X \varphi_t \Omega \\
&\le C - \frac 1 V \int_X \varphi_t \Omega,
\end{align*}
where in the third line above we applied Jensen's inequality. Integrating both sides of the inequality above it follows that $\frac 1 V \int_X \varphi_t \Omega\le C$ for some uniform $C>0$. The desired upper bound of $\varphi_t$ follows from mean value theorem. 
\end{proof}
We re-write the equation \eqref{eqn:pMA} as
\begin{equation}\label{eqn:MA2}
\big(\chi + e^{-t}(\omega_X - \chi) + \ddbar \varphi_t\big)^n = e^{-t (n-\nu)}e^{\varphi_t + \dot\varphi_t} \Omega.
\end{equation}
Integrating both sides of  \eqref{eqn:MA2} and applying Lemma \ref{lemma 3.1} we get $\int_X e^{\varphi_t} \Omega\ge c>0$ for some uniform $c>0$, which implies that $\sup_X \varphi_t \ge -C$ where $C>0$ is independent of $t$. We can now apply the $L^\infty$-estimate of $\varphi_t$ as the solution to the ``elliptic'' complex MA equation \eqref{eqn:MA2} with $e^F: = e^{\varphi_t + \dot\varphi_t}\le C$ from Lemma \ref{lemma 3.3}, and it follows that
\begin{equation}\label{eqn:L infty}
\sup_X |(\varphi_t - \sup_X \varphi_t) - \mathcal V_t| \le C \quad \Rightarrow \quad \sup_X |\varphi_t - \mathcal V_t|\le C,
\end{equation}
in which we use $|\sup_X \varphi_t|\le C$ and $\mathcal V_t$ is the envelope associated to $\chi + e^{-t}(\omega_X - \chi)$. With \eqref{eqn:L infty} we are ready to state the local noncollapsing along the normalized K\"ahler-Ricci flow \eqref{eqn:KRF}.
\begin{prop}\label{prop:KRF}
Let $X$ be a minimal K\"ahler manifold and $\omega_t$ be the solution to the K\"ahler-Ricci flow \eqref{eqn:KRF}. Let the assumptions be as above. Then the following holds. For any $\epsilon>0$, $t\in [0,\infty)$, $z_0\in X$, and $R_0\in (0,1]$, if $\ric(\omega_t)\ge - {K^2}/{R_0^2}$ on $B_{\omega_t}(z_0, 2R_0)$, then there exists a constant $c_\epsilon = c_\epsilon(n, \chi, \omega_X, K, \Omega, \epsilon)>0$ such that 
$$\frac{\vol_{\omega_t} ( B_{\omega_t}(z_0, R_0)  )} {V_t} \ge c_\epsilon R_0^{2n + \epsilon}.$$
\end{prop}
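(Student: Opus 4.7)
The plan is to reduce Proposition \ref{prop:KRF} to the proof of Theorem \ref{thm:main}, by recasting the parabolic Monge--Amp\`ere equation \eqref{eqn:MA2} as a particular instance of the elliptic family \eqref{eqn:MA} and then invoking the $L^q$-strengthening of Theorem \ref{thm:main} recorded in the remark immediately after that theorem. Concretely, I would set $\hat\omega_t := \chi + e^{-t}(\omega_X - \chi)$ in place of the family $\chi + t\omega_X$ used in the main theorem. Since $\chi\le\omega_X$, this $\hat\omega_t$ is a convex combination of a nef and a semi-positive form and so $[\hat\omega_t]$ is a K\"ahler class for every $t\in[0,\infty)$, degenerating to $[\chi]$ of numerical dimension $\nu$ as $t\to\infty$; its volume satisfies $V_t = \int_X\hat\omega_t^n = O(e^{-t(n-\nu)})$, which matches the prefactor $e^{-t(n-\nu)}$ on the right of \eqref{eqn:MA2}. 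Dividing by $V_t$ and writing $c_t := V_t/V$, the equation takes the form
$$(\hat\omega_t + \ddbar\varphi_t)^n = c_t\,e^{F_t}\,\omega_X^n, \qquad c_t/V_t = 1/V,$$
exactly as in \eqref{eqn:MA}.

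Next I would verify the two uniform-in-$t$ hypotheses needed to run the proof of Theorem \ref{thm:main}. The $L^\infty$-estimate $\sup_X|\varphi_t - \mathcal V_t|\le C$, with $\mathcal V_t$ the envelope of $\hat\omega_t$, is already furnished by \eqref{eqn:L infty}. For the integrability of the right-hand side, Lemma \ref{lemma 3.3} gives $\dot\varphi_t\le C$ and $\varphi_t\le C$ uniformly in $t$, and $\Omega/\omega_X^n$ is a fixed smooth positive function, so $e^{F_t}\le C$ pointwise after the normalization. Consequently $e^{F_t}\in L^q(X,\omega_X^n)$ for every $q>1$, with norm independent of $t$. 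Given $\epsilon>0$, I would then fix $p=p(\epsilon)>n$ large enough that the remark after Theorem \ref{thm:main} produces the exponent $2n+\epsilon$.

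With these ingredients in hand, the four-step proof of Theorem \ref{thm:main} applies verbatim: Step 2 uses the assumed Ricci lower bound $\ric(\omega_t)\ge -K^2/R_0^2$ on $B_{\omega_t}(z_0,2R_0)$ via the Laplacian comparison, while Step 3 uses the H\"ormander--Tian $\alpha$-invariant of $(X,C_3\omega_X)$, which is legitimate because $\hat\omega_t\le 2\omega_X$ uniformly, so every $\hat\omega_t$-PSH function is automatically $2\omega_X$-PSH with a $t$-independent constant. Steps 1 and 4 then go through with no modification, yielding the desired bound on $\vol_{\omega_t}(B_{\omega_t}(z_0,R_0))/V_t$.

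The main obstacle is really bookkeeping: one must check that \emph{every} constant produced along the way in the proof of Theorem \ref{thm:main} and in the $L^\infty$-estimate \eqref{eqn:L infty} is independent of the flow time $t\in[0,\infty)$. The two uniform controls $\hat\omega_t\le 2\omega_X$ and $e^{F_t}\le C$, together with the identity $c_t/V_t = 1/V$ and the one-sided bounds of Lemma \ref{lemma 3.3}, reduce this to the uniform $L^\infty$- and $L^q$-data already observed, so no new analytic input beyond what is already available in the paper is required.
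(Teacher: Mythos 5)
Your proposal is correct and follows exactly the route the paper has in mind: the authors state that ``the proof is almost the same as that of Theorem \ref{thm:main}, given the $L^\infty$-estimate \eqref{eqn:L infty},'' and your writeup supplies precisely the bookkeeping they leave to the reader — recasting \eqref{eqn:MA2} as an instance of \eqref{eqn:MA} with $\hat\omega_t=\chi+e^{-t}(\omega_X-\chi)$, observing that $e^{F_t}=e^{\varphi_t+\dot\varphi_t}\,\Omega/\omega_X^n$ (after normalization) is uniformly bounded hence in every $L^q$, and then invoking the remark after Theorem \ref{thm:main} with $p=p(\epsilon)$ large. One trivial tidy-up: since $\chi\le\omega_X$ one in fact has $\hat\omega_t\le\omega_X$ (not merely $\le 2\omega_X$), which is what makes the $\alpha$-invariant input uniform in $t$.
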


The proof of Proposition \ref{prop:KRF} is almost the same as that of Theorem \ref{thm:main}, given the $L^\infty$-estimate \eqref{eqn:L infty} of $\varphi_t$. In fact, the proof is even simpler since the function $e^F = e^{\varphi_t + \dot\varphi_t}$ is bounded in $L^\infty$-norm. We leave the details to interested readers.

If the Ricci curvature $\omega_t$ is uniformly bounded below for $t\in [0,\infty)$, the local noncollapsing in Proposition \ref{prop:KRF} will imply the uniform diameter bound of $(X,\omega_t)$, thus providing a new proof of one of the results in \cite{G}, which studied the K\"ahler-Ricci flow on minimal manifolds with numerical dimension $\nu = n$. By volume comparison, it will follow that $\vol_{\omega_t}(B_{\omega_t} (z_0, r)) \ge c V_t r^{2n}  $ for any $r\in (0,1]$. This volume decay property appears in the assumptions of the main theorems in \cite{Ha}. It suggests that this extra assumption may be superfluous for K\"ahler-Ricci flow.

\bigskip

\noindent{\bf Acknowledgement:} The authors would like to thank Professor S. Ko\l{}odziej for suggesting them generalize the diameter bound in \cite{FGS} to more general right-side of the MA equation, which motivated them to study this problem. We also thank V. Datar and  J. Sturm for their interest in this work. We are grateful to the anonymous referee for his or her valuable suggestions and comments.

\end{document}